\numberwithin{equation}{section}
\def\demo{\noindent{\it Proof. }}
\newtheorem{theorem}{Theorem}[section]
\newtheorem{lemma}[theorem]{Lemma}
\newtheorem{corollary}[theorem]{Corollary}
\theoremstyle{definition}
\newtheorem{definition}[theorem]{Definition}
\newtheorem{example}[theorem]{Example}
\begin{document}

%%%%%%%%%%%%%%%%%%%%%%%%%%%%%%%%%%%%%%%%%%%%%%%%%%%%%%%%%%%%%%%%%%%%%

\title[On the generalized Hamming weights]
{On the generalized Hamming weights of certain Reed--Muller-type codes} 

\thanks{The first author was supported by COFAA-IPN and SNI,
Mexico. The second author was
supported by a scholarship from CONACyT, Mexico. 
The third author was supported by SNI, Mexico. 
}
%\thanks{*Corresponding author}

\author[M. Gonz\'alez-Sarabia]{Manuel Gonz\'alez-Sarabia} 
\address{M.G. Sarabia. Instituto Polit\'ecnico Nacional, 
UPIITA, Av. IPN No. 2580,
Col. La Laguna Ticom\'an,
Gustavo A. Madero C.P. 07340,
 Ciudad de M\'exico. 
Departamento de Ciencias B\'asicas}
\email{mgonzalezsa@ipn.mx}

\author[D. Jaramillo]{Delio Jaramillo}
\address{
Departamento de
Matem\'aticas\\
Centro de Investigaci\'on y de Estudios
Avanzados del
IPN\\
Apartado Postal
14--740 \\
07000 Mexico City, D.F.
}
\email{delio.jaramillo@cimat.mx}

\author[R. H. Villarreal]{Rafael H. Villarreal}
\address{
Departamento de
Matem\'aticas\\
Centro de Investigaci\'on y de Estudios
Avanzados del
IPN\\
Apartado Postal
14--740 \\
07000 Mexico City, D.F.
}
\email{vila@math.cinvestav.mx}

%\urladdr{http://www.math.cinvestav.mx/$\sim$vila/}

\keywords{Reed-Muller-type codes, generalized Hamming weights, linear
code, Veronese code.}
\subjclass[2010]{Primary 13P25; Secondary 94B27.} 
\begin{abstract} 
There is a nice combinatorial formula of P. Beelen and M. Datta for the $r$-th generalized
Hamming weight of an affine cartesian code. 
Using this combinatorial formula we give an
easy to evaluate formula to compute the $r$-th generalized Hamming weight for
a family of affine cartesian codes. If $\mathbb{X}$ is a set of
projective points over a finite field we determine the basic
parameters and the generalized Hamming weights of the Veronese type
codes on $\mathbb{X}$ and their dual codes in terms of the
basic parameters and the generalized Hamming weights of the corresponding projective Reed--Muller-type
codes on $\mathbb{X}$ and their dual codes. 
\end{abstract}

\maketitle 

\section{Introduction}\label{intro-section}
Let $K=\mathbb{F}_q$ be a finite field and let $C$ be an $[m,\kappa]$-{\it linear
code} of {\it length} $m$ and {\it dimension} $\kappa$, 
that is, $C$ is a linear subspace of $K^m$ with $\kappa=\dim_K(C)$. The
multiplicative group of $K$ is denoted by $K^*$. 
The \textit{dual code} of $C$ is given by
$$
C^\perp:=\{b \in K^m\colon \langle b,c\rangle =0 \, 
\, \forall \, \,  c \in C
\},
$$
where $b=(b_1,\ldots,b_m)$, $c=(c_1,\ldots,c_m)$, and 
$\langle b,c\rangle= \sum_{i=1}^mb_i c_i$ is the inner product of $a$
and $b$. 

Fix an integer $1\leq r\leq \kappa$. 
Given a subcode $D$ of $C$ (that is, $D$ is a linear subspace of $C$),
the {\it support\/} $\chi(D)$ of $D$ is the set of non-zero positions of $D$, that is,  
$$
\chi(D):=\{i\,\vert\, \exists\, (a_1,\ldots,a_m)\in D,\, a_i\neq 0\}.
$$

The $r$-th {\it generalized Hamming weight\/} of $C$, denoted
$\delta_r(C)$, is the size of the smallest support of an
$r$-dimensional subcode \cite{helleseth,klove,wei}. Generalized Hamming weights
have been extensively studied; see
\cite{carvalho,ghorpade,GHW2014,Pellikaan,Johnsen,olaya,schaathun-willems,tsfasman,wei-yang,Yang} 
and the
references therein. The study of these weights is related to 
trellis coding, $t$--resilient functions, and was motivated by 
some applications from cryptography \cite{wei}. If $r=1$,
$\delta_1(C)$ is the \textit{minimum distance} of $C$ and is denoted
$\delta(C)$.

In this note we give explicit formulas for the generalized Hamming
weights of certain projective Reed-Muller-type codes and study the
basic parameters (length, dimension, minimum distance) and the
generalized Hamming weights of Veronese
type codes and their dual codes. 

These linear codes are constructed 
as follows. Let $\mathbb{P}^{s-1}$ be a projective space over $K$, let
$\mathbb{X}=\{[P_1],\ldots,[P_m]\}$ be a subset of $\mathbb{P}^{s-1}$ where
$m=|\mathbb{X}|$ is the cardinality of the set $\mathbb{X}$, $P_i\in
K^s$ for all $i$, and let
$S=K[t_1,\ldots,t_s]=\oplus_{d=0}^\infty S_d$ be a polynomial ring with
the standard grading, where $S_d$ is the $K$-vector space generated by the
homogeneous polynomials in $S$ of degree $d$. Fix a degree $d\geq 1$. For each $i$ there is
$h_i \in S_d$ such that $h_i(P_i) \neq 0$. Indeed suppose
$P_i=(a_1,\ldots,a_s)$, there is at least one $k \in \{1,\ldots,s\}$
such that $a_k \neq 0$. 
Setting $h_i=t_k^d$ one has that $h_i \in S_d$ and $h_i(P_i) \neq 0$.
Consider the evaluation map       
\begin{equation*}
{\rm{ev}}_d: S_d \longrightarrow K^m,\quad h \mapsto \left(
\frac{h(P_1)}{h_1(P_1)},\ldots,\frac{h(P_m)}{h_m(P_m)}\right).
\end{equation*}
\quad This is a linear map between the $K$-vector spaces $S_d$ and $K^m$.
The \textit{Reed--Muller--type-code} of order $d$ 
associated to $\mathbb{X}$ \cite{duursma-renteria-tapia,GRT}, 
denoted $C_\mathbb{X}(d)$, is the image of ${\rm{ev}}_d$, that is
$$
C_\mathbb{X}(d)=\left\{\left(\frac{h(P_1)}{h_1(P_1)},\ldots,
\frac{h(P_m)}{h_m(P_m)}\right): h \in S_d\right\}.
$$
\quad The $r$-th generalized hamming weight $\delta_r(C_\mathbb{X}(d))$ of
$C_\mathbb{X}(d)$ is sometimes denoted by 
$\delta_\mathbb{X}(d,r)$. If $r=1$, $\delta_\mathbb{X}(d,r)$ is the
minimum distance of $C_\mathbb{X}(d)$ and is denoted by
$\delta_\mathbb{X}(d)$. The map ${\rm ev}_d$ is independent of
the set of representatives $P_1,\ldots,P_m$ that we choose for the points of 
$\mathbb{X}$, and the basic parameters of $C_\mathbb{X}(d)$ are
independent of $h_1,\ldots,h_m$ \cite[Lemma~2.13]{hilbert-min-dis} and
so are the generalized Hamming weights of $C_\mathbb{X}(d)$
\cite[Remark~1]{dual-reed-muller}.

The basic parameters of $C_\mathbb{X}(d)$ are related to the algebraic
invariants of the quotient ring $S/I(\mathbb{X})$, where
$I(\mathbb{X})$ is the vanishing
ideal of $\mathbb{X}$ (see for example \cite{sarabia4,vaz,algcodes}). Indeed, the
dimension of $C_\mathbb{X}(d)$ is given by the Hilbert function
$H_\mathbb{X}$ of $S/I(\mathbb{X})$,
that is, 
$$H_\mathbb{X}(d):=\dim_K(S_d/I(\mathbb{X})_d)=\dim_K (C_\mathbb{X}(d)),
$$
the length $m=|\mathbb{X}|$ of $C_\mathbb{X}(d)$ is
the degree or the multiplicity of $S/I(\mathbb{X})$. Moreover, the
regularity index of $H_\mathbb{X}$ is the regularity of
$S/I(\mathbb{X})$ \cite[pp.~226, 346]{monalg-rev} and is denoted ${\rm reg}(S/I(\mathbb{X}))$. By the
Singleton bound \cite{tsfasman} one has $\delta_\mathbb{X}(d)=1$ for $d\geq {\rm
reg}(S/I(\mathbb{X}))$.   
Recall that the  $a$-invariant of $S/I(\mathbb{X})$, denoted $a_\mathbb{X}$, is 
the regularity index minus $1$. 

Let $A_1,\ldots,A_{s-1}$ be subsets of $K=\mathbb{F}_q$
and let  
$
\mathbb{X}:=[A_1 \times \cdots\times A_{s-1} \times \{1\}] \subset
\mathbb{P}^{s-1}
$
be a projective cartesian set, 
where $d_i=|A_i|$ for all $i=1,\ldots,s-1$ and $2 \leq d_1 \leq \cdots
\leq d_{s-1}$. The Reed--Muller-type code
$C_{\mathbb{X}}(d)$ is called an {\it affine cartesian code\/}
\cite{cartesian-codes}.

There is a recent expression for the $r$-th generalized Hamming weight
of an affine cartesian code \cite[Theorem~5.4]{GHWCartesian}, which
depends on the $r$-th monomial in ascending lexicographic order of a
certain family of monomials (see \cite{GHWCartesian} and the proof of
Theorem~\ref{sarabia-theorem}). Using
this result in Section~\ref{ghw-section} we give an easy to evaluate formula to compute the
$r$-th generalized Hamming weight for a family of affine cartesian 
codes (Theorem~\ref{sarabia-theorem}). Other formulas for the second generalized Hamming
weight of an affine cartesian code are given in
\cite[Theorems~9.3 and 9.5]{rth-footprint}. 

Let $k\geq 1$ be an integer and let $M_1,\ldots,M_N$ be the set of all
monomials in $S$ of degree $k$, where $N=\binom{k+s-1}{s-1}$.
The map 
$$
\rho_k\colon\mathbb{P}^{s-1}\rightarrow\mathbb{P}^{N-1},\ \ \ 
[x]\mapsto [(M_1(x),\ldots,M_N(x))],
$$
is called the $k$-th {\it Veronese embedding}. Given
$\mathbb{X}\subset\mathbb{P}^{s-1}$, the $k$-th 
{\it Veronese type code\/} of degree $d$ is
$C_{\rho_k(\mathbb{X})}(d)$, the Reed--Muller-type code of degree $d$
on $\rho_k(\mathbb{X})$.

In Section~\ref{Veronese-codes-section} we are able to show that the Reed--Muller-type code
$C_{\mathbb{X}}(kd)$ over the set $\mathbb{X}$ has the same basic
parameters and the same generalized Hamming weights as 
the Veronese type code $C_{\rho_k(\mathbb{X})}(d)$ over the set $\mathbb{X}$
for $k\geq 1$ and $d\geq 1$ (Theorem~\ref{veronese-codes}). 
As a consequence making
$\mathbb{X}=\mathbb{P}^{s-1}$ we recover 
a result of Renter\'\i a and Tapia-Recillas \cite[Proposition~1]{Renteria-Tapia-Veronese}. Also we show that the dual codes of $C_{\mathbb{X}}(kd)$ and  $C_{\rho_k(\mathbb{X})}(d)$ are equivalent (Theorem \ref{dual}).

For all unexplained
terminology and additional information  we refer to 
\cite{CLO,monalg-rev} (for the theory of Gr\"obner bases), and
\cite{MacWilliams-Sloane,tsfasman} (for the theory of
error-correcting codes and linear codes). 

\section{Generalized Hamming weights of some affine cartesian
codes}\label{ghw-section}

In this section we present our main result on Hamming
weights of certain cartesian codes. 
To avoid repetitions, we continue to employ
the notations and 
definitions used in Section~\ref{intro-section}.

Let  $\prec$ be a monomial
order on $S$ and let $(0)\neq I\subset S$ be an ideal. If $f$ is a non-zero 
polynomial in $S$, the {\it leading
monomial\/} of $f$ 
is denoted by ${\rm in}_\prec(f)$. The {\it initial ideal\/} of $I$, denoted by
${\rm in}_\prec(I)$,  is the monomial ideal given by 
$${\rm in}_\prec(I)=(\{{\rm in}_\prec(f)|\, f\in I\}).
$$ 

A monomial $t^a$ is called a 
{\it standard monomial\/} of $S/I$, with respect 
to $\prec$, if $t^a$ is not in the ideal ${\rm in}_\prec(I)$. 
The set of standard monomials, denoted $\Delta_\prec(I)$, is called the {\it
footprint\/} of $S/I$. The footprint of $S/I$ is also called the {\it
Gr\"obner \'escalier\/} of $I$. The image of the standard polynomials of
degree $d$, under the canonical map $S\mapsto S/I$, 
$x\mapsto \overline{x}$, is equal to $S_d/I_d$, and the 
image of $\Delta_\prec(I)$ is a basis of $S/I$ as a $K$-vector space. 
This is a classical result of Macaulay \cite[Chapter~5]{CLO}.  

We come to our main result. 

\begin{theorem}\label{sarabia-theorem}
Let $\mathbb{X}:=[A_1 \times \cdots\times A_{s-1} \times \{1\}]$ be a
subset of $\mathbb{P}^{s-1}$, where $A_i\subset\mathbb{F}_q$ and
$d_i=|A_i|$ for $i=1,\ldots,s-1$. If $2 \leq d_1 \leq \cdots \leq
d_{s-1}$ and $d\geq 1$, then
\begin{equation*}%\label{eq1}
\delta_r(C_\mathbb{X}(d))=\left\{
\begin{array}{lll}
d_{k+r+1} \cdots d_{s-1}[(d_{k+1}-\ell+1)d_{k+2} \cdots d_{k+r}-1] &
{\mbox{if}} &1\leq r<s-k-1, \\
& & \\
(d_{k+1}-\ell+1) d_{k+2} \cdots d_{s-1}-1 & {\mbox{if}} & 1\leq r=s-k-1,
\end{array} \right.
\end{equation*}
where we set $d_i \cdots 
d_j=1$ if $i>j$ or $i<1$, and $k\geq 0$, $\ell$ are
the unique integers such that $d=\sum_{i=1}^{k} (d_i-1)+\ell$ and $1 \leq \ell \leq d_{k+1}-1$. 
\end{theorem}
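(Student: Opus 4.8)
The driving tool is the combinatorial description of the generalized Hamming weights of an affine cartesian code due to Beelen and Datta \cite[Theorem~5.4]{GHWCartesian}. That result expresses $\delta_r(C_\mathbb{X}(d))$ through a distinguished monomial $M_r$, namely the $r$-th element, in the lexicographic order used there, of the family of standard monomials $t_1^{a_1}\cdots t_{s-1}^{a_{s-1}}$ with $0\le a_i\le d_i-1$ and $a_1+\cdots+a_{s-1}\le d$, which index a basis of $C_\mathbb{X}(d)$. The plan is therefore to make this formula completely explicit for our codes by (i) pinning down $M_r$ in terms of the decomposition $d=\sum_{i=1}^{k}(d_i-1)+\ell$, and (ii) evaluating the Beelen--Datta weight expression on $M_r$ and simplifying it to the two displayed products.

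For step (i) I would exploit the greedy structure behind the lexicographic order. A linear factor $t_i-\gamma$ annihilates a slice of $|\mathbb{X}|/d_i$ points, which is largest when $d_i$ is smallest; since $d_1\le\cdots\le d_{s-1}$, the extremal monomials load the low-index variables first. The lex-largest element of the family is thus $M_1=t_1^{d_1-1}\cdots t_k^{d_k-1}t_{k+1}^{\ell}$, which has degree exactly $\sum_{i=1}^{k}(d_i-1)+\ell=d$ and is admissible because $1\le\ell\le d_{k+1}-1$. The only way to descend lexicographically while keeping degree $d$ and the exponent bounds is to lower the exponent of $t_{k+1}$ by one and push the freed unit of degree into the largest still-available later variable, so a short induction gives
\[
M_r=t_1^{d_1-1}\cdots t_k^{d_k-1}\,t_{k+1}^{\ell-1}\,t_{k+r}\qquad(2\le r\le s-k-1).
\]
As $r$ runs from $2$ to $s-k-1$ the lone unit travels through $t_{k+2},\ldots,t_{s-1}$. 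This clean single-unit pattern is exactly why the statement stops at $r=s-k-1$: once the unit reaches $t_{s-1}$ one is forced to lower $t_{k+1}$ again and redistribute two units, which breaks the pattern and changes the formula.

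For step (ii) I would substitute $M_r$ into the Beelen--Datta zero-count and read off the weight. Heuristically, the surviving value in the $(k+1)$-st coordinate produces the factor $d_{k+1}-(\ell-1)=d_{k+1}-\ell+1$, the maxed-out exponents in $t_1,\ldots,t_k$ collapse those coordinates to a single value, the still-free variables $t_{k+r+1},\ldots,t_{s-1}$ contribute $d_{k+r+1}\cdots d_{s-1}$, and the lone unit in $t_{k+r}$ accounts for the $-1$ inside the bracket. Carrying the simplification out yields
\[
\delta_r(C_\mathbb{X}(d))=d_{k+r+1}\cdots d_{s-1}\bigl[(d_{k+1}-\ell+1)d_{k+2}\cdots d_{k+r}-1\bigr]
\]
for $1\le r<s-k-1$, with $r=1$ reducing to the known minimum-distance formula and serving as a base check. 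The boundary case $r=s-k-1$ is the same computation with $t_{k+r}=t_{s-1}$: then $d_{k+r+1}\cdots d_{s-1}$ is an empty product equal to $1$ and only the bracket survives, giving $(d_{k+1}-\ell+1)d_{k+2}\cdots d_{s-1}-1$. Thus the two displayed expressions coincide under the convention $d_i\cdots d_j=1$ for $i>j$, and I would separate them only to make the boundary transparent.

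The principal obstacle I anticipate is bookkeeping rather than conceptual. First, one must match the variable and ordering conventions to the precise normalization of \cite[Theorem~5.4]{GHWCartesian}, so that its ``ascending lexicographic'' selection corresponds to the greedy descent above and so that the admissible range $1\le r\le s-k-1$ indeed lies within the dimension of $C_\mathbb{X}(d)$. Second, one must confirm that the inductive description of $M_r$ skips no monomial, which is the delicate point in step (i). Finally, turning the Beelen--Datta zero-count evaluated at $M_r$ into the compact product form without sign or off-by-one errors requires care precisely at $r=s-k-1$, where several empty products appear simultaneously.
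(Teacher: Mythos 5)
Your proposal is correct and follows essentially the same route as the paper: both hinge on Beelen--Datta's Theorem~5.4, both identify the same $r$-th monomial $t_1^{d_1-1}\cdots t_k^{d_k-1}t_{k+1}^{\ell-1}t_{k+r}$ (the paper lists it in decreasing lex order on the degree-$\leq d$ footprint, exactly as you do), and both then simplify the resulting expression, with $r=s-k-1$ handled as the empty-product boundary case just as you observe. The one convention to nail down in your step (ii) is the one you yourself flag: the Beelen--Datta formula $1+\sum_i a_{r,i}\prod_{j>i}d_j$ is evaluated not at $M_r$ itself but at its complement $t_{k+1}^{d_{k+1}-\ell}t_{k+2}^{d_{k+2}-1}\cdots t_{k+r}^{d_{k+r}-2}t_{k+r+1}^{d_{k+r+1}-1}\cdots t_{s-1}^{d_{s-1}-1}$, the $r$-th monomial in ascending lex order among standard monomials of degree $\geq\sum_i(d_i-1)-d$ (your ``surviving value'' factors $d_i-b_i$ are implicitly this complementation), and this is the monomial the paper writes down explicitly before carrying out the telescoping simplification.
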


\demo Setting $n=s-1$, $R=K[t_1,\ldots,t_n]$ a polynomial ring with
coefficients in $K=\mathbb{F}_q$, and $L=(t_1^{d_1},\ldots,t_n^{d_{n}})$, we order the set
$M_{\leq d}:=\Delta_\prec(L)\cap R_{\leq d}$ of all standard
monomials of $R/L$ of degree at most $d$ with the lexicographic 
order (lex order for short), that is, $t^a\succ t^b$ if
and only if the first non-zero entry of $a-b$ is positive. For
$r>1$, $0 \leq k\leq n-r$, the $r$-th 
monomial $t_1^{b_{r,1}}\cdots t_n^{b_{r,n}}$ of $M_{\leq d}$ in
decreasing lex order is
$$
t_1^{d_1-1}\cdots t_k^{d_k-1}t_{k+1}^{\ell-1}t_{k+r}
$$
and the $r$-th  monomial $t_1^{a_{r,1}}\cdots t_n^{a_{r,n}}$ of
$M_{\geq c_0-d}:=\Delta_\prec(L)\cap R_{\geq c_0-d}$ in ascending lex order, where
$c_0=\sum_{i=1}^n(d_i-1)$, is 
$$
t_{k+1}^{d_{k+1}-\ell}t_{k+2}^{d_{k+2}-1}\cdots
t_{k+r-1}^{d_{k+r-1}-1} t_{k+r}^{d_{k+r}-2} t_{k+r+1}^{d_{k+r+1}-1}
\cdots t_n^{d_n-1}.
$$  

Case (I): $0 \leq k < n-r$. The case $r=1$ was proved in 
\cite[Theorem 3.8]{cartesian-codes}. Thus we may also assume $r\geq 2$. 
Therefore, applying \cite[Theorem 5.4]{GHWCartesian}, we obtain
that $\delta_r(C_\mathbb{X}(d))$ is given by 
\begin{align*}
&1+\sum_{i=1}^na_{r,i}\prod_{j=i+1}^nd_j=1+(d_{k+1}-\ell)d_{k+2}
\cdots d_n+\sum_{i=k+2, i \neq k+r}^n 
(d_{i}-1) \prod_{j=i+1}^n d_j\\
&+(d_{k+r}-2)d_{k+r+1} \cdots d_n \\ & = (d_{k+1}-\ell)d_{k+2}
\cdots d_n+\left(1+\sum_{i=k+2}^n 
(d_{i}-1) \prod_{j=i+1}^n d_j\right)-d_{k+r+1} \cdots d_n\\ 
&=(d_{k+1}-\ell)d_{k+2}
\cdots d_n+\left(d_{k+2}\cdots d_n\right)-d_{k+r+1} \cdots d_n\\
&=(d_{k+1}-\ell+1)d_{k+2}
\cdots d_n-d_{k+r+1} \cdots d_n=d_{k+r+1} \cdots d_n
[(d_{k+1}-\ell+1)d_{k+2} \cdots d_{k+r}-1].
\end{align*}

Case (II): $k=n-r$. In this case the $r$-th monomial $t_1^{a_{r,1}}\cdots t_n^{a_{r,n}}$ of
$M_{\geq c_0-d}$ in ascending lex order is 
$$
t_{k+1}^{d_{k+1}-\ell}t_{k+2}^{d_{k+2}-1}\cdots
t_{k+r-1}^{d_{k+r-1}-1} t_{k+r}^{d_{k+r}-2} t_{k+r+1}^{d_{k+r+1}-1}
\cdots t_n^{d_n-1}.
$$  

Therefore, applying \cite[Theorem 5.4]{GHWCartesian}, we obtain
that $\delta_r(C_\mathbb{X}(d))$ is given by 
\begin{align*}
&1+\sum_{i=1}^na_{r,i}\prod_{j=i+1}^nd_j=1+(d_{k+1}-\ell)d_{k+2}
\cdots d_n+\sum_{i=k+2}^{n-1} 
(d_{i}-1) \prod_{j=i+1}^n d_j+(d_n-2)\\
&=(d_{k+1}-\ell)d_{k+2}
\cdots d_n+\left(1+\sum_{i=k+2}^{n} 
(d_{i}-1) \prod_{j=i+1}^n d_j\right)-1\\
& = (d_{k+1}-\ell)d_{k+2}
\cdots d_n+\left(d_{k+2}\cdots d_n\right)-1=(d_{k+1}-\ell+1)d_{k+2}
\cdots d_n-1.\qed
\end{align*}

\begin{definition}\label{projectivetorus-def}\rm The set 
$\mathbb{T}=\{[(x_1,\ldots,x_s)]\in\mathbb{P}^{s-1}\vert\, x_i\in
K^*\, \forall\, i\}$ is called a {\it projective
torus\/}.
\end{definition}

\begin{corollary} \label{GHWtorus}
Let $\mathbb{T}$ be a projective torus in
$\mathbb{P}^{s-1}$ and let $\delta_r(C_\mathbb{T}(d))$ be the $r$-th
generalized Hamming weight of $C_\mathbb{T}(d)$. Then
$$
\delta_r(C_\mathbb{T}(d))=\left[(q-1)^{r-1}(q-\ell)-1\right](q-1)^{s-k-r-1}
$$
for $1\leq r\leq s-k-1$, 
%$1\leq r\leq
%H_\mathbb{T}(d)$, 
where $d=k(q-2)+\ell$, $k\geq 0$,  
%$0\leq k\leq s-2$, 
$1\leq \ell
\leq q-2$.
\end{corollary}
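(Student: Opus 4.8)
The plan is to realize the projective torus as a particular affine cartesian set and then to specialize Theorem~\ref{sarabia-theorem}. First I would observe that every point $[(x_1,\dots,x_s)]$ of $\mathbb{T}$ has $x_s\in K^*$, so it can be rescaled to $[(x_1,\dots,x_{s-1},1)]$ with each $x_i\in K^*$; conversely every such point lies in $\mathbb{T}$. Hence $\mathbb{T}=[A_1\times\cdots\times A_{s-1}\times\{1\}]$ with $A_i=K^*$ for all $i$, so that $d_i=|A_i|=q-1$ for $i=1,\dots,s-1$. In particular $d_1=\cdots=d_{s-1}=q-1$, and the hypothesis $2\le d_1\le\cdots\le d_{s-1}$ of Theorem~\ref{sarabia-theorem} holds as soon as $q\ge 3$, which is exactly the range in which the stated condition $1\le\ell\le q-2$ is nonvacuous.

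Next I would match the parameter decomposition. Setting $d_i=q-1$ in the identity $d=\sum_{i=1}^k(d_i-1)+\ell$ with $1\le\ell\le d_{k+1}-1$ yields $d=k(q-2)+\ell$ with $1\le\ell\le q-2$, which is precisely the decomposition appearing in the corollary; since the integers $k\ge 0$ and $\ell$ in Theorem~\ref{sarabia-theorem} are uniquely determined, they coincide with those in the statement.

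It then remains to substitute $d_i=q-1$ into the two branches of the formula of Theorem~\ref{sarabia-theorem} and simplify, the essential bookkeeping being a count of factors. The product $d_{k+r+1}\cdots d_{s-1}$ has $s-k-r-1$ factors, hence equals $(q-1)^{s-k-r-1}$; the product $d_{k+2}\cdots d_{k+r}$ has $r-1$ factors, hence equals $(q-1)^{r-1}$; and $d_{k+1}-\ell+1=q-\ell$. For $1\le r<s-k-1$ this turns the first branch into $(q-1)^{s-k-r-1}\bigl[(q-\ell)(q-1)^{r-1}-1\bigr]$, which is the asserted expression. For $r=s-k-1$ the exponent $s-k-r-1$ equals $0$, so $(q-1)^{s-k-r-1}=1$, and I would check that the second branch $(q-\ell)(q-1)^{s-k-2}-1$ coincides with the same expression, using $r-1=s-k-2$; thus the two branches collapse into the single formula valid for $1\le r\le s-k-1$.

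The only delicate point, and the one I would verify most carefully, is that the boundary case $r=s-k-1$ of Theorem~\ref{sarabia-theorem} is genuinely subsumed by the uniform corollary formula; this is what forces the normalization $(q-1)^0=1$ and the reindexing $r-1=s-k-2$ to be handled consistently. Everything else is a direct specialization together with the count of exponents of $q-1$ in each product.
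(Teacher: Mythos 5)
Your proposal is correct and follows exactly the paper's route: the paper's proof consists precisely of specializing Theorem~\ref{sarabia-theorem} with $A_i=K^*$ (so $d_i=q-1$) for all $i$. Your additional bookkeeping --- counting the $s-k-r-1$ and $r-1$ factors of $q-1$, matching the decomposition $d=k(q-2)+\ell$, and checking that the branch $r=s-k-1$ collapses into the uniform formula via $(q-1)^0=1$ --- simply makes explicit what the paper leaves as ``follows readily.''
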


\begin{proof} It follows readily from Theorem~\ref{sarabia-theorem}
making $A_i=K^*=\mathbb{F}_q\setminus\{0\}$ for $i=1,\ldots,s-1$.  
\end{proof}

This corollary generalizes the case when $\mathbb{X}$ is a projective
torus in $\mathbb{P}^{s-1}$ and $r=1$: 

\begin{theorem}{\rm\cite[Theorem~3.5]{ci-codes}}\label{maria-vila-hiram-eliseo} 
Let $\mathbb{T}$ be a projective torus in
$\mathbb{P}^{s-1}$ and let $C_\mathbb{T}(d)$ be the Reed--Muller-type
code on $\mathbb{T}$ of degree $d\geq 1$. Then its length is
$(q-1)^{s-1}$, its minimum distance is given by 
$$
\delta_\mathbb{T}(d)=\left\{\begin{array}{cll}
(q-1)^{s-(k+2)}(q-1-\ell)&\mbox{if}&d\leq (q-2)(s-1)-1,\\
1&\mbox{if}&d\geq (q-2)(s-1),
\end{array}
 \right.
$$
where $k$ and $\ell$ are the unique integers such that $k\geq 0$,
$1\leq \ell\leq q-2$ and $d=k(q-2)+\ell$, and the regularity of
$S/I(\mathbb{T})$ is $(q-2)(s-1)$. 
\end{theorem}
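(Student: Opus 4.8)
The plan is to realise $\mathbb{T}$ as the affine cartesian set of Theorem~\ref{sarabia-theorem} in which every factor is the full multiplicative group: taking $A_i=\mathbb{F}_q\setminus\{0\}$ for $i=1,\ldots,s-1$ gives $\mathbb{T}=[A_1\times\cdots\times A_{s-1}\times\{1\}]$ with $d_i=|A_i|=q-1$ for every $i$. Because the chosen representatives $(a_1,\ldots,a_{s-1},1)$ all have last coordinate $1$, they are pairwise non-proportional, so the length is $|\mathbb{T}|=\prod_{i=1}^{s-1}d_i=(q-1)^{s-1}$, which gives the first assertion.

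For the minimum distance I would specialise Corollary~\ref{GHWtorus}, equivalently Theorem~\ref{sarabia-theorem}, to $r=1$ and $d_i=q-1$. Writing $d=k(q-2)+\ell$ with $k\geq 0$ and $1\leq\ell\leq q-2$, both branches $r<s-k-1$ and $r=s-k-1$ of Theorem~\ref{sarabia-theorem} collapse, once the empty products are evaluated, to the single value $(q-1)^{s-(k+2)}(q-1-\ell)$. This is valid exactly when $k\leq s-2$, that is for $d\leq(q-2)(s-1)$, and a one-line check shows it equals $1$ precisely at $k=s-2$, $\ell=q-2$, i.e.\ at $d=(q-2)(s-1)$; so the two branches of the stated formula meet correctly at the boundary. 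For $d>(q-2)(s-1)$ the degree exceeds the regularity computed next, whence $C_\mathbb{T}(d)=K^m$ and $\delta_\mathbb{T}(d)=1$ by the Singleton bound.

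It remains to compute ${\rm reg}(S/I(\mathbb{T}))$. Here I would dehomogenise at $t_s$ and pass to the affine cartesian set $A_1\times\cdots\times A_{s-1}$, whose vanishing ideal has the reduced Gr\"obner basis $\{\prod_{a\in A_i}(t_i-a)\}_{i=1}^{s-1}$ with leading monomials $t_i^{\,q-1}$. Hence its footprint is the order ideal $\{t^a:0\leq a_i\leq q-2,\ 1\leq i\leq s-1\}$ of cardinality $(q-1)^{s-1}$, and by Macaulay's theorem $H_\mathbb{T}(d)$ equals the number of these standard monomials of degree at most $d$. This count increases strictly until it attains its maximum $(q-1)^{s-1}$, which first occurs when $d$ reaches the degree $\sum_{i=1}^{s-1}(q-2)=(q-2)(s-1)$ of the top standard monomial $t_1^{\,q-2}\cdots t_{s-1}^{\,q-2}$. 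Since the regularity index of $H_\mathbb{T}$ equals ${\rm reg}(S/I(\mathbb{T}))$, this gives ${\rm reg}(S/I(\mathbb{T}))=(q-2)(s-1)$ and at the same time justifies the Singleton bound used above.

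The length and the algebraic reduction of the cartesian formula are routine. The step demanding the most care is the Hilbert-function computation of the regularity, where one must verify both that $H_\mathbb{T}$ saturates at the value $(q-1)^{s-1}$ and that the saturation degree is exactly $(q-2)(s-1)$ and no smaller. The reconciliation of the two branches of Theorem~\ref{sarabia-theorem} at $d=(q-2)(s-1)$ is the other point where careful bookkeeping of indices and empty products is needed, although it poses no conceptual difficulty.
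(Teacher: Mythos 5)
Your proposal is correct, but it follows a genuinely different route from the paper, because the paper gives no proof of this statement at all: it is quoted as a known result from \cite[Theorem~3.5]{ci-codes}. What you do instead is make the result self-contained within the paper's own machinery. Your minimum-distance computation is exactly the specialization $r=1$, $A_i=K^{*}$, $d_i=q-1$ of Theorem~\ref{sarabia-theorem} (equivalently Corollary~\ref{GHWtorus}), and the index bookkeeping checks out: for $k<s-2$ the first branch gives $(q-1)^{s-k-2}\left[(q-\ell)\cdot 1-1\right]$ and for $k=s-2$ the second branch gives $(q-\ell)\cdot 1-1$, both equal to $(q-1)^{s-(k+2)}(q-1-\ell)$, with the value $1$ occurring exactly at $k=s-2$, $\ell=q-2$. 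In effect this part of your argument is the rigorous version of the paper's own passing remark that Corollary~\ref{GHWtorus} ``generalizes'' the present theorem. The ingredient you add, which the paper's results do not cover, is the regularity computation; this is genuinely needed, both for the branch $d\geq(q-2)(s-1)$ and to delimit where the cartesian formula applies, and your footprint argument (Gr\"obner basis $\{t_i^{q-1}-1\}_{i=1}^{s-1}$, standard monomials $\{t^a:0\leq a_i\leq q-2\}$, strict growth of the count of standard monomials of degree at most $d$ up to $d=(q-2)(s-1)$) is sound. Two minor remarks: the identification of $H_\mathbb{T}(d)$ with the number of standard monomials of degree at most $d$ rests on the standard isomorphism between $S_d/I(\mathbb{X})_d$ and the space of polynomials of degree at most $d$ on the affine set $A_1\times\cdots\times A_{s-1}$ (as used in \cite{cartesian-codes,hilbert-min-dis}); you invoke it implicitly and it deserves an explicit citation. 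Also, your derivation reverses the historical order — \cite{ci-codes} predates \cite{cartesian-codes} and \cite{GHWCartesian}, on which Theorem~\ref{sarabia-theorem} rests — but there is no circularity, since those papers do not depend on the torus case.
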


The case when $\mathbb{X}$ is a 
projective torus in $\mathbb{P}^{s-1}$ and $r=2$ 
is treated in \cite[Theorem~18]{camps-sarabia-sarmiento-vila}.

\section{Veronese type codes}\label{Veronese-codes-section} 
Let $S=K[t_1,\ldots,t_s]$ be a polynomial
ring over a field $K$ and let $\{M_1,\ldots,M_N\}$ be the set of all
monomials of $S$ of degree $k\geq 1$, where $N=\binom{k+s-1}{s-1}$.
The map 
$$
\rho_k\colon\mathbb{P}^{s-1}\rightarrow\mathbb{P}^{N-1},\ \ \ 
[x]\mapsto [(M_1(x),\ldots,M_N(x))]
$$
is called the $k$-th {\it Veronese embedding}. Given
$\mathbb{X}\subset\mathbb{P}^{s-1}$, the $k$-th 
{\it Veronese type code\/} of degree $d$ is
$C_{\rho_k(\mathbb{X})}(d)$, the Reed--Muller-type code of degree $d$
on $\rho_k(\mathbb{X})$.
The next aim is to show that the Reed--Muller-type code
$C_{\mathbb{X}}(kd)$ has the same basic parameters and the same
generalized Hamming weights as the Veronese type code 
$C_{\rho_k(\mathbb{X})}(d)$ for $k\geq 1$ and $d\geq 1$.

\begin{lemma}\label{veronese-injective} $\rho_k$ is well-defined and injective.
\end{lemma}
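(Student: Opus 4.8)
The plan is to verify that $\rho_k$ is well-defined on projective space and then that it is injective. For well-definedness, I must check two things: first, that the image point is never the zero vector, and second, that the definition does not depend on the choice of homogeneous representative $x$ for the class $[x]$. For the first point, if $[x]\in\mathbb{P}^{s-1}$ then $x=(x_1,\ldots,x_s)\neq 0$, so some coordinate $x_i$ is nonzero; then the monomial $t_i^k$ evaluates to $x_i^k\neq 0$ at $x$, and since $t_i^k$ is one of the $M_1,\ldots,M_N$, the vector $(M_1(x),\ldots,M_N(x))$ has a nonzero entry and thus lies in $\mathbb{P}^{N-1}$. For the second point, replacing $x$ by $\lambda x$ with $\lambda\in K^*$ scales each degree-$k$ monomial by $\lambda^k$, so $(M_1(\lambda x),\ldots,M_N(\lambda x))=\lambda^k(M_1(x),\ldots,M_N(x))$, which represents the same point of $\mathbb{P}^{N-1}$. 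Hence $\rho_k$ is a well-defined map of projective spaces.

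For injectivity, suppose $\rho_k([x])=\rho_k([y])$ for two points $[x],[y]\in\mathbb{P}^{s-1}$. This means there is a scalar $\mu\in K^*$ with $M_j(y)=\mu\, M_j(x)$ for all $j=1,\ldots,N$. The key is to exploit the pure powers $t_1^k,\ldots,t_s^k$, which all appear among the $M_j$, together with the mixed monomials $t_i^{k-1}t_\ell$. After normalizing representatives, I would first locate an index $i$ with $x_i\neq 0$; comparing the values of $t_i^k$ gives $y_i^k=\mu\, x_i^k\neq 0$, so $y_i\neq 0$ as well. Then for any other index $\ell$, comparing the values of the monomial $t_i^{k-1}t_\ell$ yields $y_i^{k-1}y_\ell=\mu\, x_i^{k-1}x_\ell$, and dividing by the relation $y_i^k=\mu\, x_i^k$ gives $y_\ell/y_i=x_\ell/x_i$ for every $\ell$. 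Thus $y$ is a scalar multiple of $x$, which means $[x]=[y]$ in $\mathbb{P}^{s-1}$, establishing injectivity.

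The only genuine subtlety is the use of the mixed monomials $t_i^{k-1}t_\ell$ to recover the ratios of coordinates; I expect this to be the main (and only mildly delicate) step, since one must ensure that every such monomial is in fact of degree $k$ and hence included among the $M_j$. This holds because $t_i^{k-1}t_\ell$ has total degree $(k-1)+1=k$, so it is one of the $N=\binom{k+s-1}{s-1}$ monomials whose values are matched by the hypothesis $\rho_k([x])=\rho_k([y])$. With that observation in place the argument is a short direct computation, so no further machinery is required.
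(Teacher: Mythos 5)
Your proof is correct and follows essentially the same route as the paper: well-definedness via the $\lambda^k$ scaling of degree-$k$ monomials, and injectivity by comparing the pure power $t_i^k$ to pin down the scalar and the mixed monomials $t_i^{k-1}t_\ell$ to recover the remaining coordinate ratios. Your additional check that the image vector is nonzero (using $t_i^k$) is a small point the paper leaves implicit, but otherwise the two arguments coincide.
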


\begin{proof}  If $[x]=[z]$, $x,y\in\mathbb{P}^{s-1}$, $x=(x_1,\ldots,x_s)$,
$z=(z_1,\ldots,z_s)$, then $x=\lambda z$ for some $\lambda\in
K^*$. Thus $M_i(x)=\lambda^k M_i(z)$ for all $i$, that is, 
$[(M_i(x))]=[(M_i(z))]$, here we are using $(M_i(x))$ as a short hand
for $(M_1(x),\ldots,M_N(x))$. Thus $\rho_k$ is well-defined. To show that
$\rho_k$ is injective assume that $\rho_k([x])=\rho_k([z])$. Then  for
some $\mu\in K^*$ one has $M_i(x)=\mu M_i(z)$ for all $i$. Pick $j$
such that $z_j\neq 0$ and let $\lambda=x_j/z_j$. Note that $M_i=t_j^k$
for some $i$. Then one has
$x_j^k=\mu z_j^k$, that is, $\mu=\lambda^k$. For each $1\leq \ell\leq
s$, using the monomial $M_i=t_j^{k-1}t_\ell$, 
one has
$$
x_j^{k-1}x_\ell=\mu z_j^{k-1}z_\ell=\lambda^k
z_j^{k-1}z_\ell=\lambda(\lambda z_j)^{k-1}z_\ell=\lambda(x_j^{k-1})z_\ell.
$$
Thus $x_\ell=\lambda z_\ell$ for all $\ell$, that is, $[x]=[z]$. 
\end{proof}

We come to the main result of this section. 

\begin{theorem}\label{veronese-codes} If
$\mathbb{X}\subset\mathbb{P}^{s-1}$, then the projective
Reed--Muller-type codes $C_{\rho_k(\mathbb{X})}(d)$ and
$C_{\mathbb{X}}(kd)$ have the same basic parameters and the same
generalized Hamming weights for $k\geq 1$ 
and $d\geq 1$. 
\end{theorem}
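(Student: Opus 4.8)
The plan is to realize both codes as the image of essentially the same evaluation map, using the graded algebra homomorphism dual to the Veronese embedding. Write $\mathbb{X}=\{[P_1],\ldots,[P_m]\}$ with $P_i\in K^s$, let $T=K[y_1,\ldots,y_N]$ be a polynomial ring whose variables are indexed by the degree-$k$ monomials $M_1,\ldots,M_N$ of $S$, and consider the graded $K$-algebra homomorphism $\varphi\colon T\to S$ determined by $\varphi(y_i)=M_i$, which rescales degrees by the factor $k$, so that $\varphi(T_d)\subseteq S_{kd}$. By Lemma~\ref{veronese-injective} the map $\rho_k$ is injective, hence $\rho_k(\mathbb{X})=\{[Q_1],\ldots,[Q_m]\}$ with $Q_i=(M_1(P_i),\ldots,M_N(P_i))$ is again a set of $m$ distinct points; in particular $C_{\rho_k(\mathbb{X})}(d)$ and $C_{\mathbb{X}}(kd)$ have the same length $m$.

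The first genuine step is to prove that the restriction $\varphi\colon T_d\to S_{kd}$ is surjective. Since the remarks recalled in Section~\ref{intro-section} guarantee that the basic parameters and all generalized Hamming weights of a Reed--Muller-type code are independent of the chosen representatives $P_i$ and of the normalizing polynomials $h_i$, I am free to make compatible choices on both sides. Concretely, for each $i$ I would pick a normalizer $g_i\in T_d$ with $g_i(Q_i)\neq 0$ for $C_{\rho_k(\mathbb{X})}(d)$, and then use $h_i:=\varphi(g_i)\in S_{kd}$ as the normalizer for $C_{\mathbb{X}}(kd)$; note that $h_i(P_i)=g_i(Q_i)\neq 0$, so this is legitimate. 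The surjectivity of $\varphi|_{T_d}$ is the combinatorial heart of the argument: every monomial $t^a\in S_{kd}$ of degree $kd$ splits as a product of $d$ monomials of degree $k$ (partition the multiset of $kd$ variables recorded by $a$ into $d$ blocks of size $k$), so $t^a=\varphi(y_{i_1}\cdots y_{i_d})$ lies in the image, and as the $t^a$ span $S_{kd}$ the map $\varphi|_{T_d}$ is onto.

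With these choices, for every $g\in T_d$ and every $i$ one has $g(Q_i)=g(M_1(P_i),\ldots,M_N(P_i))=\varphi(g)(P_i)$, and dividing by the matching normalizers gives $g(Q_i)/g_i(Q_i)=\varphi(g)(P_i)/h_i(P_i)$. Hence the two evaluation maps satisfy ${\rm ev}_d^{\rho_k(\mathbb{X})}={\rm ev}_{kd}^{\mathbb{X}}\circ\varphi|_{T_d}$ as maps into $K^m$. Because $\varphi|_{T_d}$ is surjective onto $S_{kd}$, the images coincide, so $C_{\rho_k(\mathbb{X})}(d)={\rm ev}_{kd}^{\mathbb{X}}(\varphi(T_d))={\rm ev}_{kd}^{\mathbb{X}}(S_{kd})=C_{\mathbb{X}}(kd)$ as linear subspaces of $K^m$. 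Equal codes trivially share the same dimension, minimum distance, and all generalized Hamming weights $\delta_r$; since none of these invariants depends on the representatives or on the normalizers, the equality of parameters and of generalized Hamming weights transfers to the originally defined codes.

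The only delicate point, and thus the would-be obstacle, is bookkeeping rather than substance: one must verify that the normalizer transport $h_i=\varphi(g_i)$ is valid (it is, because $\rho_k$ injective forces $Q_i\neq Q_j$ and $h_i(P_i)=g_i(Q_i)\neq 0$) and that both evaluation maps land in the \emph{same} copy of $K^m$ with the same coordinate ordering, so that the coincidence of images is a genuine equality of codes rather than mere monomial equivalence. Once the surjectivity of $\varphi$ on degree-$d$ components is established, everything else is formal.
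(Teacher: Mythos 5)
Your proof is correct and follows essentially the same route as the paper: both arguments hinge on the substitution identity $f(M_1,\ldots,M_N)(P_i)=f(M_1(P_i),\ldots,M_N(P_i))$ together with the surjectivity of the degree-$d$ piece of the map $y_i\mapsto M_i$ onto $S_{kd}$ (i.e., $K[M_1,\ldots,M_N]_d=S_{kd}$, for which you supply the explicit block-partition argument the paper only asserts), plus the injectivity of $\rho_k$ for equality of lengths. The only difference is bookkeeping: you transport normalizers via $h_i=\varphi(g_i)$ to obtain literal equality of the two codes, whereas the paper puts the points in standard form, evaluates the $\mathbb{X}$-side without denominators, and obtains the equivalence $C_{\mathbb{X}}(kd)=\lambda\cdot C_{\rho_k(\mathbb{X})}(d)$ --- a formulation it then reuses in the proof of Theorem~\ref{dual}.
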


\begin{proof} Setting $N=\binom{k+s-1}{s-1}$, let 
$R=K[y_1,\ldots,y_N]=\oplus_{d=0}^\infty R_d$ be a
polynomial ring over the field $K$ with the standard grading. We can write
$\mathbb{X}=\{[P_1,],\ldots,[P_m]\}$,  
where $m=|\mathbb{X}|$, $P_i\in K^s$, and the $[P_i]$'s are in standard
form, i.e., the first non-zero entry of
$P_i$ is $1$ for
all $i$. By Lemma~\ref{veronese-injective} the map $\rho_k$ is
injective. Thus $C_{\mathbb{X}}(kd)$ and $C_{\rho_k(\mathbb{X})}(d)$
have the same length. As $[P_1],\ldots,[P_m]$ are in standard form,
for each $i$ there is $g_i\in S_{kd}$ such that $g_i(P_i)=1$. 
Therefore, by \cite[Lemma~2.13]{hilbert-min-dis}, 
we may assume that the Reed--Muller-type code $C_{\mathbb{X}}(kd)$ is the image of the 
evaluation map 
\begin{equation}\label{ev-map-reed-muller}
{\rm ev}_{kd}\colon S_{kd}=K[t_1,\ldots,t_s]_{kd}\rightarrow K^{m},\ \ \ \ \ 
g\mapsto
\left(g(P_1),\ldots,g(P_m)\right),
\end{equation}
and the Veronese type code $C_{\rho_k(\mathbb{X})}(d)$ is the image of the
evaluation map
\begin{equation}\label{ev-map-veronese}
{\rm ev}_d^1\colon R_d=K[y_1,\ldots,y_N]_d\rightarrow K^m,\ \ \ \ \ 
f\mapsto
\left(\frac{f(Q_1)}{f_1(Q_1)},\ldots,\frac{f(Q_m)}{f_m(Q_m)}\right),
\end{equation}
where $Q_i=(M_1(P_i),\ldots,M_N(P_i))$ for
$i=1,\ldots,m$, and $f_1,\ldots,f_m$ are polynomials in $R_d$ such that
$f_i(Q_i)\neq 0$ for
$i=1,\ldots,m$. For any polynomial $f=f(y_1,\ldots,y_N)=\sum \lambda_a y^a$ in $R_d$,
$\lambda_a\in K^*$, one has
\begin{eqnarray}\label{mar18-16}
f(M_1,\ldots,M_N)(P_i)&=&\sum\lambda_a (M_1^{a_1}\cdots
M_N^{a_N})(P_i)\\
&=&
\sum\lambda_a M_1^{a_1}(P_i)\cdots M_N^{a_N}(P_i)\nonumber\\
&=&f(M_1(P_i),\ldots,M_N(P_i)).\nonumber
\end{eqnarray}

As $K[t_1,\ldots,t_s]_{kd}$ is equal to $K[M_1,\ldots,M_N]_d$, any 
$g$ in $K[t_1,\ldots,t_s]_{kd}$ can be written as
$g=f(M_1,\ldots,M_N)$ for some $f=f(y_1,\ldots,y_N)$ in $R_d$.
Therefore, using Eq.~(\ref{mar18-16}), we get 
\begin{eqnarray*}
C_{\mathbb{X}}(kd)
&=&\{(g(P_1),\ldots,g(P_m))\, \vert\, g\in
K[t_1,\ldots,t_s]_{kd}\}\\
&=&\{(f(Q_1),\ldots,f(Q_m))\,
\vert\, f\in K[y_1,\ldots,y_N]_{d}\}.
\end{eqnarray*}
As a consequence, setting $\lambda_i=f_i(Q_i)$ and
$\lambda=(\lambda_1,\ldots,\lambda_m)$, one has 
\begin{equation} \label{equivalence}
C_{\mathbb{X}}(kd)=\lambda\cdot
C_{\rho_k(\mathbb{X})}(d):=\{\lambda\cdot a\, \vert\,
a\in C_{\rho_k(\mathbb{X})}(d)\},
\end{equation}
where $\lambda\cdot a:=(\lambda_1a_1,\ldots,\lambda_ma_m)$ for
$a=(a_1,\ldots,a_m)$ in $C_{\rho_k(\mathbb{X})}(d)$. This means 
that the linear codes $C_{\mathbb{X}}(kd)$ and
$C_{\rho_k(\mathbb{X})}(d)$ are equivalent
\cite[Remark~1]{dual-reed-muller}. 
Thus the dimension and minimum
distance of $C_{\mathbb{X}}(kd)$ and $C_{\rho_k(\mathbb{X})}(d)$ are
the same, and so are the generalized Hamming weights. 
\end{proof}

For convenience we recall the following classical result of S{\o}rensen
\cite{sorensen}.

\begin{theorem}{\rm(S{\o}rensen
\cite{sorensen})}\label{reed-muller-classical}
Let $K=\mathbb{F}_q$ be a finite field and let  
$C_\mathbb{X}(d)$ be the classical projective Reed--Muller code of degree $d$
on the set $\mathbb{X}=\mathbb{P}^{s-1}$. Then
$|\mathbb{X}|=(q^s-1)/(q-1)$, the minimum distance of
$C_\mathbb{X}(d)$ is given by 
$$
\delta_\mathbb{X}(d)=\left\{\hspace{-1mm}
\begin{array}{ll}\left(q-\ell+1\right)q^{s-k-2}&\mbox{ if }
d\leq (s-1)(q-1),\\
\qquad \qquad 1&\mbox{ if } d\geq
(s-1)(q-1)+1,
\end{array}
\right.
$$
where $0\leq k\leq s-2$ and $\ell$ are the unique integers such that 
$d=k(q-1)+\ell$ and $1\leq \ell \leq
q-1$, and the regularity of $S/I(\mathbb{X})$ is $(s-1)(q-1)+1$. 
\end{theorem}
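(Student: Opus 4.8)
The plan is to reduce the determination of $\delta_{\mathbb{X}}(d)$ to a counting problem and then to resolve it by induction on $s$, using as the base of the induction the affine result already recorded in Theorem~\ref{sarabia-theorem}. First, counting the points of $\mathbb{P}^{s-1}$ according to the position of their first nonzero coordinate gives $|\mathbb{X}|=q^{s-1}+q^{s-2}+\cdots+1=(q^s-1)/(q-1)$, which is the length. Since $C_{\mathbb{X}}(d)$ is an evaluation code, the weight of the codeword ${\rm ev}_d(f)$ equals $m$ minus the number of points of $\mathbb{X}$ at which $f$ vanishes; hence
\[
\delta_{\mathbb{X}}(d)=m-\max\{\, |Z_{\mathbb{X}}(f)| : f\in S_d,\ f\notin I(\mathbb{X})_d\,\},\qquad Z_{\mathbb{X}}(f)=\{[P]\in\mathbb{X}:f(P)=0\}.
\]
So everything comes down to bounding, sharply, the number of projective zeros of a form of degree $d$ that does not vanish on all of $\mathbb{X}$.

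For the upper bound I would induct on $s$ through the decomposition $\mathbb{P}^{s-1}=H\sqcup U$, where $H=\{[x]:x_1=0\}\cong\mathbb{P}^{s-2}$ and $U=\{[x]:x_1=1\}\cong\mathbb{A}^{s-1}$. Given $f\in S_d$, its restriction to $U$ (set $t_1=1$) is a polynomial of degree at most $d$ in the $s-1$ affine coordinates, whose zeros are bounded by the affine estimate; the latter is exactly the $r=1$ instance of Theorem~\ref{sarabia-theorem} applied to the cartesian set with $A_1=\cdots=A_{s-1}=\mathbb{F}_q$ (all $d_i=q$), which gives affine minimum distance $(q-\ell)q^{s-k-2}$ and hence at most $q^{s-1}-(q-\ell)q^{s-k-2}$ affine zeros. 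The restriction $f|_H$ is a form of degree $d$ in $s-1$ variables, to which the inductive hypothesis in $\mathbb{P}^{s-2}$ applies. The delicate point is that these two bounds cannot simply be added: one must track the multiplicity with which the hyperplane $H$ sits in the zero locus of $f$. Writing $f=t_1^{a}g$ with $t_1\nmid g$ modulo $I(\mathbb{X})$, the case $f|_H\equiv0$ (that is $a\geq1$) forces all of $H$ into $Z_{\mathbb{X}}(f)$ but lowers the available degree on $U$, while the case $f|_H\not\equiv0$ feeds the induction in $\mathbb{P}^{s-2}$; optimizing the resulting recursion over $a$ and over the split of the degree $d=k(q-1)+\ell$ yields the claimed value $(q-\ell+1)q^{s-k-2}$.

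For the matching lower bound I would exhibit an explicit form of degree $d$ attaining the bound: a product of $k$ ``wrap-around'' factors of the shape $t_i^{q-1}-t_j^{q-1}$ together with $\ell$ hyperplanes chosen through a common linear flat, arranged exactly as in S{\o}rensen's original construction \cite{sorensen}; a direct count over $H\sqcup U$ shows that its complement in $\mathbb{X}$ has precisely $(q-\ell+1)q^{s-k-2}$ points. Finally, the regularity statement, together with the assertion that $\delta_{\mathbb{X}}(d)=1$ once $d\geq(s-1)(q-1)+1$, follows by computing the Hilbert function of $S/I(\mathbb{P}^{s-1})$: using the known generators $t_it_j^{q}-t_i^{q}t_j$ of the vanishing ideal one passes to the footprint and checks that the number of standard monomials stabilizes at $m$ precisely in degree $(s-1)(q-1)+1$, so ${\rm reg}(S/I(\mathbb{X}))=(s-1)(q-1)+1$ and the Singleton bound recalled in Section~\ref{intro-section} gives $\delta_{\mathbb{X}}(d)=1$ beyond this value. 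I expect the main obstacle to be the inductive upper bound, specifically controlling the hyperplane-multiplicity recursion through the boundary cases of the parameters $k$ and $\ell$ (where incrementing $d$ resets $\ell$ and raises $k$) and verifying that the worst case of the recursion is exactly the configuration realized by the extremal form.
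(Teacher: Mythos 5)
The paper itself gives no proof of this statement: it is recalled from S{\o}rensen \cite{sorensen} purely as a citation, so there is no internal argument to compare against and your attempt must stand on its own. It does not: there is a genuine gap at exactly the step you label ``the delicate point'' and then defer to ``optimizing the resulting recursion.'' The recursion you set up does not close. In the main inductive case ($f|_H\not\equiv 0$ and $f|_U\not\equiv 0$), adding your two bounds gives
\begin{equation*}
\mathrm{wt}(\mathrm{ev}_d(f))\;\geq\;(q-\ell)q^{s-k-2}\;+\;(q-\ell+1)q^{s-k-3},
\end{equation*}
the first term being the affine bound on $U$ and the second the inductive bound on $H\cong\mathbb{P}^{s-2}$, while the theorem claims $\delta_\mathbb{X}(d)=(q-\ell+1)q^{s-k-2}$. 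The difference is
\begin{equation*}
\left[(q-\ell)q^{s-k-2}+(q-\ell+1)q^{s-k-3}\right]-(q-\ell+1)q^{s-k-2}=(1-\ell)\,q^{s-k-3},
\end{equation*}
which is strictly negative whenever $\ell\geq 2$. So for $\ell\geq 2$ the sum of the two optima is strictly \emph{smaller} than the claimed minimum distance: the naive recursion proves a strictly weaker statement, and no optimization over the exponent $a$ or over the splitting $d=k(q-1)+\ell$ can repair this, because the defect sits in the worst case of the recursion itself. What is missing is precisely the technical heart of S{\o}rensen's proof: one must show that the two extremes cannot occur simultaneously (a form attaining the maximal number of affine zeros on $U$ cannot also attain the maximal number of projective zeros on $H$), and this requires a structural analysis of extremal forms, not just additivity of supports over $H\sqcup U$.

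There are also unhandled boundary regimes that a complete induction would need. When $(s-2)(q-1)<d\leq(s-1)(q-1)$ the inductive hypothesis on $H\cong\mathbb{P}^{s-2}$ is in the degenerate range $\delta=1$ and the bookkeeping above changes. Moreover your dichotomy is not exhaustive as stated: $f|_H\equiv 0$ \emph{as a function} does not imply $t_1\mid f$; it only says $f(0,t_2,\ldots,t_s)\in I(\mathbb{P}^{s-2})$, which can happen for a nonzero restriction once $d\geq (s-2)(q-1)+1$, so writing $f=t_1^{a}g$ does not capture that case. The remaining ingredients of your sketch (the length count, the reduction of $\delta_\mathbb{X}(d)$ to maximizing $|Z_\mathbb{X}(f)|$, the extremal configuration for the lower bound, and the footprint/Singleton argument giving the regularity and $\delta_\mathbb{X}(d)=1$ for $d\geq(s-1)(q-1)+1$) are fine in outline, but without a correct inductive upper bound the formula for $\delta_\mathbb{X}(d)$ is not proved.
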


Veronese codes are a natural generalization of the classical
projective Reed--Muller codes.

\begin{corollary}\cite[Proposition~1]{Renteria-Tapia-Veronese}\label{veronese-codes-corollary} If
$\mathbb{V}_k=\rho_k(\mathbb{P}^{s-1})$, then the projective
Reed--Muller-type codes $C_{\mathbb{V}_k}(d)$ and $C_{\mathbb{P}^{s-1}}(kd)$ have the
same basic parameters for $k\geq 1$ and $d\geq 1$.
\end{corollary}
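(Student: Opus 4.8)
The plan is to obtain this statement as an immediate specialization of Theorem~\ref{veronese-codes}. First I would set $\mathbb{X}=\mathbb{P}^{s-1}$ in that theorem, so that by definition $\rho_k(\mathbb{X})=\rho_k(\mathbb{P}^{s-1})=\mathbb{V}_k$ and hence $C_{\mathbb{V}_k}(d)=C_{\rho_k(\mathbb{P}^{s-1})}(d)$. Theorem~\ref{veronese-codes} then asserts that $C_{\rho_k(\mathbb{P}^{s-1})}(d)$ and $C_{\mathbb{P}^{s-1}}(kd)$ have the same basic parameters (indeed, the same generalized Hamming weights as well), which is precisely the conclusion required. The hypotheses $k\geq 1$ and $d\geq 1$ match verbatim, so no additional case analysis is needed.

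Since Theorem~\ref{veronese-codes} already furnishes an identity of codes up to equivalence, namely $C_{\mathbb{X}}(kd)=\lambda\cdot C_{\rho_k(\mathbb{X})}(d)$ via Eq.~(\ref{equivalence}), there is nothing further to compute: equivalent linear codes share length, dimension, and minimum distance. Consequently there is no genuine obstacle to overcome in this corollary; the substantive work was already carried out in the proof of Theorem~\ref{veronese-codes}, namely the well-definedness and injectivity of $\rho_k$ supplied by Lemma~\ref{veronese-injective}, together with the graded identity $K[t_1,\ldots,t_s]_{kd}=K[M_1,\ldots,M_N]_d$ that drives the comparison of evaluation maps.

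If one wished to upgrade the statement from a comparison to a set of explicit formulas, I would additionally invoke S\o rensen's Theorem~\ref{reed-muller-classical} to read off the length $(q^s-1)/(q-1)$, the dimension, and the minimum distance of $C_{\mathbb{P}^{s-1}}(kd)$, and then transport these invariants through the equivalence of Eq.~(\ref{equivalence}) to obtain closed formulas for the corresponding basic parameters of the Veronese code $C_{\mathbb{V}_k}(d)$. This step is optional, as the corollary as stated claims only that the two families of codes share the same basic parameters.
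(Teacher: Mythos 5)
Your proposal is correct and matches the paper's own proof exactly: the paper likewise obtains the corollary by specializing Theorem~\ref{veronese-codes} to $\mathbb{X}=\mathbb{P}^{s-1}$. The extra remarks about Eq.~(\ref{equivalence}) and S{\o}rensen's theorem are accurate but not needed for the statement as given.
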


\begin{proof}  This follows at once from Theorem~\ref{veronese-codes}
making $\mathbb{X}=\mathbb{P}^{s-1}$.
\end{proof}

As a byproduct we relate the dual codes
of $C_{\rho_k(\mathbb{X})}(d)$ and $C_{\mathbb{X}}(kd)$.

\begin{theorem} \label{dual}
If $\mathbb{X}$ is a subset of $\mathbb{P}^{s-1}$, then 
$C^{\perp}_{\rho_k(\mathbb{X})}(d)$ and $C^{\perp}_{\mathbb{X}}(kd)$
are equivalent codes and 
$$
C^{\perp}_{\rho_k(\mathbb{X})} (d)=\lambda \cdot C^{\perp}_{\mathbb{X}}(kd),
$$
where $\lambda=(\lambda_1,\ldots,\lambda_m)$, with
$\lambda_i=f_i(Q_i)$ for all $i=1,\ldots,m$, 
is the vector that was given in the proof of Theorem
\ref{veronese-codes}. 
\end{theorem}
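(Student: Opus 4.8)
The plan is to deduce the dual relation directly from the equivalence $C_{\mathbb{X}}(kd)=\lambda\cdot C_{\rho_k(\mathbb{X})}(d)$ of Equation~(\ref{equivalence}), established in the proof of Theorem~\ref{veronese-codes}, by understanding how taking duals interacts with coordinatewise scaling. The first step is to record the elementary fact that, for any linear code $C\subset K^m$ and any vector $\lambda=(\lambda_1,\ldots,\lambda_m)$ with all entries in $K^*$, one has $(\lambda\cdot C)^\perp=\lambda^{-1}\cdot C^\perp$, where $\lambda^{-1}=(\lambda_1^{-1},\ldots,\lambda_m^{-1})$ and $\lambda^{-1}\cdot C^\perp=\{\lambda^{-1}\cdot b\,\vert\, b\in C^\perp\}$. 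This is a one-line verification: a vector $b$ lies in $(\lambda\cdot C)^\perp$ if and only if $\sum_{i=1}^m b_i\lambda_i a_i=0$ for every $a\in C$, which is exactly the condition $\langle\lambda\cdot b,a\rangle=0$ for all $a\in C$, that is, $\lambda\cdot b\in C^\perp$, equivalently $b\in\lambda^{-1}\cdot C^\perp$.

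With this identity in hand I would take $C=C_{\rho_k(\mathbb{X})}(d)$. Since $\lambda_i=f_i(Q_i)\neq 0$ for every $i$, the scaling vector $\lambda$ has all entries in $K^*$, so the identity applies. Dualizing Equation~(\ref{equivalence}) then gives $C^\perp_{\mathbb{X}}(kd)=\lambda^{-1}\cdot C^\perp_{\rho_k(\mathbb{X})}(d)$, and multiplying both sides coordinatewise by $\lambda$ produces the claimed equality $C^\perp_{\rho_k(\mathbb{X})}(d)=\lambda\cdot C^\perp_{\mathbb{X}}(kd)$. The equivalence of the two dual codes is then automatic, because coordinatewise multiplication by a fixed vector with nonzero entries is a monomial transformation of $K^m$ and hence preserves code equivalence, exactly as used for the primal codes through \cite[Remark~1]{dual-reed-muller}.

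I do not anticipate a genuine obstacle here: the whole content is the scaling-versus-dual identity, and the only point demanding care is the bookkeeping that distinguishes $\lambda$ from $\lambda^{-1}$ when passing from Equation~(\ref{equivalence}) to its dual, so that the scaling lands on the correct side in the final formula.
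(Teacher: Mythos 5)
Your proof is correct, and it takes a somewhat different logical route from the paper's. The paper proves only the single inclusion $\lambda\cdot C^{\perp}_{\mathbb{X}}(kd)\subset C^{\perp}_{\rho_k(\mathbb{X})}(d)$, by the same inner-product manipulation you use (moving the factors $\lambda_i$ from one argument of the pairing to the other), and then upgrades this inclusion to an equality by a dimension count:
$$
\dim_K \lambda\cdot C^{\perp}_{\mathbb{X}}(kd)=\dim_K C^{\perp}_{\mathbb{X}}(kd)=m-\dim_K C_{\mathbb{X}}(kd)=m-\dim_K C_{\rho_k(\mathbb{X})}(d)=\dim_K C^{\perp}_{\rho_k(\mathbb{X})}(d),
$$
where the middle step uses the equality of dimensions of the primal codes coming from Eq.~(\ref{equivalence}). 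You instead isolate the scaling-duality identity $(\lambda\cdot C)^{\perp}=\lambda^{-1}\cdot C^{\perp}$ as a biconditional, which yields both inclusions at once; dualizing Eq.~(\ref{equivalence}) and rescaling by $\lambda$ then gives the stated equality with no dimension argument at all. Your route is slightly cleaner and more general: the identity is a reusable fact about any pair of codes related by coordinatewise scaling, and it does not lean on knowing that the primal codes have equal dimension, whereas the paper's argument is self-contained but needs the extra counting step. Both arguments correctly rely on the entries $\lambda_i=f_i(Q_i)$ being nonzero, and both conclude the equivalence of the dual codes from the fact that coordinatewise multiplication by a vector in $(K^*)^m$ is a monomial transformation, as in \cite[Remark~1]{dual-reed-muller}.
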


\begin{proof} Let $(u_1,\ldots,u_m) \in C^{\perp}_{\mathbb{X}}(kd)$.
Then $\langle(u_1,\ldots,u_m),(v_1,\ldots,v_m)\rangle=\sum_{i=1}^m
u_iv_i=0$, for all $(v_1,\ldots,v_m) \in C_{\mathbb{X}}(kd)$. By
using Eq.~(\ref{equivalence}) we conclude that
$$
\langle(u_1,\ldots,u_m),(\lambda_1 v'_1,\ldots,\lambda_m v'_m)\rangle
=\sum_{i=1}^m u_i \lambda_i v'_i=0,
$$
for all $(v'_1,\ldots,v'_m) \in C_{\rho_k(\mathbb{X})} (d)$. Therefore
$$
\langle(\lambda_1 u_1,\ldots,\lambda_m u_m),(v'_1,\ldots,v'_m)\rangle
=\sum_{i=1}^m \lambda_i u_i v'_i=0.
$$
for all $(v'_1,\ldots,v'_m) \in C_{\rho_k(\mathbb{X})} (d)$. Thus
\begin{equation} \label{inclusion}
\lambda \cdot C^{\perp}_{\mathbb{X}}(kd) \subset C^{\perp}_{\rho_k(\mathbb{X})} (d).
\end{equation}

Furthermore one has the equalities
\begin{eqnarray} \label{dimension}
\dim_K \lambda \cdot C^{\perp}_{\mathbb{X}}(kd)&=&\dim_K
C^{\perp}_{\mathbb{X}}(kd)= m-\dim_K C_{\mathbb{X}} (kd) \nonumber \\
&= & m-\dim_K C_{\rho_k(\mathbb{X})}(d)=\dim_K C^{\perp}_{\rho_k(\mathbb{X})} (d),
\end{eqnarray}
and the equality $C^{\perp}_{\rho_k(\mathbb{X})} (d)=\lambda \cdot
C^{\perp}_{\mathbb{X}}(kd)$ follows from Eqs.~(\ref{inclusion}) and
(\ref{dimension}). Thus $C^{\perp}_{\rho_k(\mathbb{X})}(d)$ 
and  $C^{\perp}_{\mathbb{X}}(kd)$ are equivalent codes
\cite[Remark~1]{dual-reed-muller}.  
\end{proof}

\begin{corollary}
If $\mathbb{X}=\mathbb{P}^{s-1}$,
$\mathbb{V}_k=\rho_k(\mathbb{P}^{s-1})$, 
and $kd \leq (q-1)(s-1)$, then the linear code 
$C_{\mathbb{V}_k}(d)$ is equivalent to
$$
\left\{
\begin{array}{lll}
C_{\mathbb{P}^{s-1}} ((q-1)(s-1)-kd) & {\mbox{if}} & kd \not \equiv 0
\mod(q-1), \\
((1,\ldots,1),C_{\mathbb{P}^{s-1}} ((q-1)(s-1)-kd)) & {\mbox{if}} & kd \equiv 0\mod (q-1),
\end{array} \right.
$$
where $((1,\ldots,1),C_{\mathbb{P}^{s-1}} ((q-1)(s-1)-kd))$ is the
subspace of 
$K^m$ generated by $(1,\ldots,1)$ and $C_{\mathbb{P}^{s-1}} ((q-1)(s-1)-kd)$.
\end{corollary}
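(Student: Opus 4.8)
The plan is to reduce the assertion to the classical projective Reed--Muller codes on $\mathbb{P}^{s-1}$ and then to read off the description from the complementary-degree duality of S{\o}rensen. First I would specialize Theorem~\ref{veronese-codes} to $\mathbb{X}=\mathbb{P}^{s-1}$: since $\mathbb{V}_k=\rho_k(\mathbb{P}^{s-1})$, that theorem produces a scaling vector $\lambda$ with $C_{\mathbb{P}^{s-1}}(kd)=\lambda\cdot C_{\mathbb{V}_k}(d)$, so $C_{\mathbb{V}_k}(d)$ and $C_{\mathbb{P}^{s-1}}(kd)$ are equivalent. Because monomial equivalence is preserved under the operations used below, it then suffices to describe $C_{\mathbb{P}^{s-1}}(kd)$, up to equivalence, in terms of the complementary-degree code $C_{\mathbb{P}^{s-1}}((q-1)(s-1)-kd)$. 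Here the hypothesis $kd\leq (q-1)(s-1)$ is used twice: it places $kd$ in the nontrivial range $0\leq e\leq (s-1)(q-1)$, and it makes $(q-1)(s-1)-kd$ a nonnegative degree strictly below the regularity $(s-1)(q-1)+1$ of $S/I(\mathbb{P}^{s-1})$ (Theorem~\ref{reed-muller-classical}), so that the code displayed on the right is a genuine Reed--Muller-type code and not the whole of $K^m$.

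The key external input is S{\o}rensen's theorem for classical projective Reed--Muller codes \cite{sorensen}, which ties $C_{\mathbb{P}^{s-1}}(e)$ to $C_{\mathbb{P}^{s-1}}((s-1)(q-1)-e)$: the identification is the clean complementary-degree one when $(q-1)\nmid e$, and it acquires the all-ones generator $(1,\ldots,1)$ precisely when $(q-1)\mid e$. I would apply this with $e=kd$; combined with the equivalence of the previous paragraph, and with Theorem~\ref{dual} (for $\mathbb{X}=\mathbb{P}^{s-1}$) to keep track of how the same scaling vector $\lambda$ intertwines the two families, this transports the description back to $\mathbb{V}_k$. The two branches of the displayed formula then correspond exactly to the dichotomy $kd\not\equiv 0$ versus $kd\equiv 0 \pmod{q-1}$, and the appearance of $((1,\ldots,1),C_{\mathbb{P}^{s-1}}((q-1)(s-1)-kd))$ in the congruent case reflects the fact that $(1,\ldots,1)$ is then not already a codeword of $C_{\mathbb{P}^{s-1}}((q-1)(s-1)-kd)$ and must be adjoined.

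The hard part will be the bookkeeping around the all-ones correction and the congruence condition, and in particular matching dimensions so that the code on the right genuinely represents $C_{\mathbb{V}_k}(d)$. Concretely, I would isolate as a lemma the computation of the evaluation sums $\sum_{[P]\in\mathbb{P}^{s-1}} t^{\alpha}(P)$, taken over representatives in standard form, for a standard monomial $t^{\alpha}$ of the relevant degree; deciding when these sums vanish is exactly the mechanism behind S{\o}rensen's duality and is where the residue of the exponents modulo $q-1$, hence the condition $(q-1)\mid kd$, enters. With that lemma in hand, a final check using $\dim_K C_{\mathbb{P}^{s-1}}(e)=H_{\mathbb{P}^{s-1}}(e)$ and the known values of the Hilbert function of $S/I(\mathbb{P}^{s-1})$ confirms that the two sides have the same length and dimension in each of the two cases, which together with the equivalence supplied by Theorems~\ref{veronese-codes} and~\ref{dual} completes the argument.
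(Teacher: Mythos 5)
Your ingredients are exactly the paper's own---the entire published proof is the one line ``This result follows at once from Theorem~\ref{dual} and \cite[Theorem 2]{sorensen}''---but your assembly has a genuine gap: you never carry out the dualization that these ingredients force, and the reduction you declare sufficient would fail. S{\o}rensen's Theorem~2 is a statement about \emph{dual} codes: for $1\leq e\leq (s-1)(q-1)$ it identifies $C^{\perp}_{\mathbb{P}^{s-1}}(e)$, not $C_{\mathbb{P}^{s-1}}(e)$ itself, with $C_{\mathbb{P}^{s-1}}((s-1)(q-1)-e)$ when $e\not\equiv 0\pmod{q-1}$, adjoining the all-ones vector when $e\equiv 0\pmod{q-1}$. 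Consequently your step ``it suffices to describe $C_{\mathbb{P}^{s-1}}(kd)$, up to equivalence, in terms of the complementary-degree code'' is not available: the displayed right-hand code equals $C^{\perp}_{\mathbb{P}^{s-1}}(kd)$ and has dimension $m-H_{\mathbb{P}^{s-1}}(kd)$, whereas $C_{\mathbb{V}_k}(d)$ is equivalent to $C_{\mathbb{P}^{s-1}}(kd)$ (Theorem~\ref{veronese-codes}) and has dimension $H_{\mathbb{P}^{s-1}}(kd)$. Your promised final Hilbert-function check would therefore refute, not confirm, the identification as you set it up: with $q=8$, $s=3$, $k=2$, $d=1$ (so $kd=2\leq 14$), Example~\ref{classical-r-m-example} gives $H_{\mathbb{P}^{2}}(2)=6$, while $C_{\mathbb{P}^{2}}(12)$ has dimension $67$.

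What the cited theorems actually deliver---and what the corollary's appeal to Theorem~\ref{dual} signals---is the statement for the dual code, i.e., the left-hand side should be read as $C^{\perp}_{\mathbb{V}_k}(d)$. Then the argument is a two-step chain: Theorem~\ref{dual} with $\mathbb{X}=\mathbb{P}^{s-1}$ gives $C^{\perp}_{\mathbb{V}_k}(d)=\lambda\cdot C^{\perp}_{\mathbb{P}^{s-1}}(kd)$, so these two duals are equivalent; and \cite[Theorem 2]{sorensen} applied with $e=kd$ (the hypothesis $1\leq kd\leq(q-1)(s-1)$ is exactly S{\o}rensen's range) replaces $C^{\perp}_{\mathbb{P}^{s-1}}(kd)$ by the displayed code according to whether $q-1$ divides $kd$. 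Once this dualization is made explicit, your proposed lemma on the evaluation sums $\sum_{[P]}t^{\alpha}(P)$ and the accompanying dimension bookkeeping are superfluous: they amount to reproving S{\o}rensen's theorem, which both you and the paper are entitled to cite. Your parenthetical that $(1,\ldots,1)$ is ``not already a codeword'' of the complementary-degree code in the congruent case likewise needs no separate argument, being encoded in \cite[Theorem 2]{sorensen}---and note it is false at the boundary $kd=(q-1)(s-1)$, where the complementary code is $C_{\mathbb{P}^{s-1}}(0)$, spanned by $(1,\ldots,1)$ itself, although the displayed formula still holds there.
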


\begin{proof}
This result follows at once from Theorem \ref{dual} and \cite[Theorem 2]{sorensen}.
\end{proof}

The rest of this section is devoted to show some explicit examples. 

\begin{example}\label{classical-r-m-example}\rm Let $K$ be the field $\mathbb{F}_{8}$. If
$\mathbb{X}=\mathbb{P}^2$, then by Theorem~\ref{reed-muller-classical}
the basic parameters of 
the classical projective Reed--Muller-type code $C_\mathbb{X}(d)$ of degree
$d$ are given by 
\begin{small}
\begin{eqnarray*}
\hspace{-11mm}&&\left.
\begin{array}{c|c|c|c|c|c|c|c|c|c|c|c|c|c|c|c}
d & 1 & 2 & 3 & 4 & 5 & 6 & 7 & 8 & 9& 10& 11&12 &
 13& 14& 15\\
   \hline
 |\mathbb{X}| & 73 & 73 & 73 & 73 & 73 & 73 & 73 & 73 &73 &73 & 73&73 &
 73& 73&73 
 \\ 
   \hline
 H_\mathbb{X}(d)    \    & 3 & 6  & 10& 15 &21& 28& 36 & 45 &52 &58
 &63 &67 &70
 &72 &73 \\ 
   \hline
 \delta_{\mathbb{X}}(d) &64&56&48&40& 32& 24& 16& 8&7 & 6&5 & 4& 3&2
 &1 \\ 
\end{array}
\right.
\end{eqnarray*}
\end{small}
The dimension of $C_\mathbb{X}(d)$ is $H_\mathbb{X}(d)$. The
regularity of $S/I(\mathbb{X})$ is $15$ and the $a$-invariant is 
$14$. 
\end{example}

\begin{example}\rm Let $K$ be the field $\mathbb{F}_{8}$. If $k=2$, 
$\mathbb{X}=\mathbb{P}^2$, and $\mathbb{V}_2=\rho_2(\mathbb{X})$, then
by Theorem~\ref{veronese-codes} and
Example~\ref{classical-r-m-example} 
the parameters of 
the Veronese code $C_{\mathbb{V}_2}(d)$ of degree
$d$ are given by 
\begin{eqnarray*}
\hspace{-11mm}&&\left.
\begin{array}{c|c|c|c|c|c|c|c|c}
d & 1 & 2 & 3 & 4 & 5 &6 &
 7& 8\\
   \hline
 |\mathbb{V}_2|& 73 &73 &73 & 73&73 &
 73& 73&73 
 \\ 
   \hline
 H_{\mathbb{V}_2}(d)    \    & 6 & 15 & 28& 45 &58
 &67  &72
  &73 \\ 
   \hline
 \delta_{\mathbb{V}_2}(d) &56&40& 24& 8 & 6 & 4&2
 &1 \\ 
\end{array}
\right.
\end{eqnarray*}
The regularity of $S/I(\mathbb{V}_2)$ is $8$ and the $a$-invariant is
$7$.
\end{example}

\begin{example}\rm Let $K$ be the field $\mathbb{F}_{5}$. If $k=2$, 
$\mathbb{T}$ is a projective torus in $\mathbb{P}^2$, and 
$\rho_2(\mathbb{T})$ is the corresponding Veronese type code, then
by Corollary \ref{GHWtorus}, Theorem~\ref{maria-vila-hiram-eliseo},
\cite[Theorem 18]{camps-sarabia-sarmiento-vila}, 
and \textit{Macaulay}$2$ \cite{mac2}, we obtain the following information for $C_\mathbb{T}(d)$:
%\begin{small}
\begin{eqnarray*}
\hspace{-11mm}&&\left.
\begin{array}{c|c|c|c|c|c|c}
d & 1 & 2 & 3 & 4 & 5 &6\\
   \hline
 |\mathbb{T}|& 16 &16 &16 & 16&16 &
 16
 \\ 
   \hline
 H_{\mathbb{T}}(d)    \    & 3 & 6 & 10&13 &15
 &16 \\ 
   \hline
 \delta_{\mathbb{T}}(d) &12&8& 4& 3 & 2&1 \\ 
 \hline
 \delta_2(C_{\mathbb{T}}(d)) & 15 & 11 & 7 & 4 & 3 & 2 \\
\hline
 \delta_3(C_{\mathbb{T}}(d)) & 16 & 12 & 8 & 6 & 4 & 3
\end{array}
\right.
\end{eqnarray*}
%\end{small}
and the regularity of $S/I(\mathbb{T})$ is $6$. Therefore, by 
Theorem~\ref{veronese-codes}, we get the following information  for 
the Veronese type code $C_{\rho_2(\mathbb{T})}(d)$:
%\begin{small}
\begin{eqnarray*}
\hspace{-11mm}&&\left.
\begin{array}{c|c|c|c}
d & 1 & 2 & 3\\
   \hline
 |\rho_2(\mathbb{T})|& 16 &16 &16 
 \\ 
   \hline
 H_{\rho_2(\mathbb{T})}(d)    \    & 6 & 13 & 16 \\ 
   \hline
 \delta_{\rho_2(\mathbb{T})}(d) &8&3& 1\\ 
 \hline
 \delta_2(C_{\rho_2(\mathbb{T})}(d)) & 11 & 4 & 2 \\
 \hline
 \delta_3(C_{\rho_2(\mathbb{T})}(d)) & 12 & 6 & 3 
\end{array}
\right.
\end{eqnarray*}
%\end{small}
and the regularity of $S/I(\rho_2(\mathbb{T}))$ is $3$. 
\end{example}

%\noindent {\bf Acknowledgments.} We thank the referee for a careful
%reading of the paper and for the improvements suggested. 

\bibliographystyle{plain}

\end{document}